\def\today{\ifcase\month\or
  January\or February\or March\or April\or May\or June\or
  July\or August\or September\or October\or November\or December\fi
  \space\number\day, \number\year}
 \newtheorem{theorem}{Theorem}[section]
 \newtheorem{lemma}[theorem]{Lemma}
 \newtheorem{corollary}[theorem]{Corollary}
 \theoremstyle{definition}
 \theoremstyle{remark}
\newcommand{\meas}{\textup{meas}}
\def\E{\mathbb E}
\def\P{\mathbb P}
\def\n{\mathbf n}
\newcommand{\re}{\textup{Re}}
\newcommand{\im}{\textup{Im}}
 \def\Phrand{ \Phi_{\mathrm{rand}}}
 \def\Phrhat{\widehat{\Phi}_{\mathrm{rand}}}
\begin{document}

\title[An asymptotic expansion of Selberg's central limit theorem]{An asymptotic expansion of Selberg's central limit theorem  near the critical line} 

\date{\today}

\author[Y. Lee]{Yoonbok Lee}
\address{Department of Mathematics,  Research Institute of Basic Science,   Incheon National University,   119 Academy-ro, Yeonsu-gu, Incheon,  22012, Korea}
\email{leeyb@inu.ac.kr, leeyb131@gmail.com}

\allowdisplaybreaks
\numberwithin{equation}{section}

\keywords{Selberg's central limit theorem, Riemann zeta function}

\subjclass[2010]{11M06}


\maketitle

\begin{abstract}
We find an asymptotic expansion of Selberg's central limit theorem for the Riemann zeta function on $\sigma  = \frac12 + ( \log T)^{-\theta}$ and $t \in [T, 2T]$, where $ 0 < \theta < \frac12$ is a constant.
\end{abstract}

\section{Introduction}

Let $ \theta > 0 $ and $ \sigma_T := \sigma_T(\theta) = \frac12+ ( \log T)^{-\theta}$ throughout the paper. Selberg's central limit theorem (Theorem 2 in \cite{Sel}) says that for $ \frac12 \leq \sigma \leq \sigma_T$,  the function 
$$ \frac{ \log \zeta( \sigma+ it ) }{ \sqrt{ \pi  \sum_{ p < t } p^{- 2 \sigma}}} $$
 has a normal Gaussian distribution in the complex plane. Note that 
 $$\sum_{ p < t } p^{- 2 \sigma} = \log \bigg( \min \bigg(  \frac{1}{\sigma- 1/2} ,   \log t \bigg)\bigg) +O(1) $$
  for $ \sigma \geq \frac12 $. 
 Recently, Radziwi\l\l~ and Soundararajan in \cite{RS} provide a simple proof of Selberg's central limit theorem for $\log | \zeta ( \frac12 + it ) | $.

When $ \sigma > 1/2 $ is not too close to $ 1/2$, $ \log \zeta ( \sigma + i t ) $ has a nice approximation by a Dirichlet polynomial so that allows us to improve Selberg's central limit theorem by  finding lower order terms.
 In this direction, Ha and Lee in \cite{HL} prove the following theorem.
\begin{theorem}[Corollary 1.5 of \cite{HL}]
Let $0<\theta<\frac12 $, $a<b$ and $c<d$ be real numbers. There exist polynomials $ g_k ( x,y)$ of degree $\leq k $ such that
\begin{equation}\label{eqn:cor1.5:HL}\begin{split}
\frac1T \meas& \{ t \in [T,2T] :  \frac{\log\zeta(\sigma_T+it )}{\sqrt{ \pi \psi_T } } \in[a,b] \times [c,d] \}  \\ 
&=  \sum_{ 0 \leq k  \leq 5  } \frac{ 1  }{ \sqrt{\psi_T}^{k}}   \int_c^d \int_a^b g_k ( x,y )  e^{- \pi(x^{2}+y^{2}) } dx dy + O \bigg(  \frac{1}{ ( \log \log T)^3}\bigg)
\end{split}\end{equation}
as $ T \to \infty$, where $g_0(x,y)=1 $ and 
\begin{equation}\label{def psi T} 
   \psi_T  :=\sum_{p  } \sum_{k \geq 1 } \frac{1}{k^{2}}p^{-2k\sigma_T }=\theta\log\log T+O(1).
   \end{equation}
\end{theorem}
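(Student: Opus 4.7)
The plan is to establish an Edgeworth-type expansion for the joint characteristic function of $(\re\log\zeta,\im\log\zeta)$ at $\sigma_T+it$, $t\in[T,2T]$, out to five correction terms of relative size $\psi_T^{-k/2}$, and then to deduce the statement for the sharp indicator of $[a,b]\times[c,d]$ by a smoothing argument. The basic input is a Selberg-type approximation of $\log\zeta(\sigma_T+it)$ by a short Dirichlet polynomial
$$ P_T(t) := \sum_{p^k\le X}\frac{1}{k\,p^{k(\sigma_T+it)}}, $$
with $X = X(T)$ a slowly decreasing power of $T$ such as $X=T^{1/(\log\log T)^2}$. Since $\sigma_T-\tfrac12=(\log T)^{-\theta}$ with $\theta<\tfrac12$, standard arguments (see \cite{Sel,RS}) produce $\log\zeta(\sigma_T+it)=P_T(t)+R(t)$ with $R$ so small, after removing an exceptional set of measure $o((\log\log T)^{-3})$, that it contributes negligibly to every moment relevant for a five-term expansion.

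For the truncated polynomial I would next compute the joint characteristic function
$$ \Phi_T(\xi,\eta) := \frac1T\int_T^{2T}\exp\!\bigl(i\xi\,\re P_T(t)+i\eta\,\im P_T(t)\bigr)\,dt. $$
Expanding the exponential and applying the standard orthogonality bound $\frac1T\int_T^{2T}(m/n)^{it}\,dt\ll 1/(T\,|\log(m/n)|)$ for $m\ne n$ show that $\Phi_T$ is replaced, up to an error $O(T^{-\delta})$, by the analogous quantity $\Phi^{\mathrm{rand}}_T(\xi,\eta) = \prod_{p\le X}E_p(\xi,\eta)$ arising from the random-Euler model in which $p^{-it}$ is replaced by independent Haar-random $U_p\in S^1$. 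Taking $\log E_p$, Taylor-expanding in $p^{-\sigma_T}$, summing over $p\le X$ via \eqref{def psi T}, and rescaling $(\xi,\eta)\mapsto(\xi,\eta)/\sqrt{\pi\psi_T}$ produce an expansion
$$ \Phi_T\!\left(\frac{\xi}{\sqrt{\pi\psi_T}},\frac{\eta}{\sqrt{\pi\psi_T}}\right) = e^{-(\xi^2+\eta^2)/(4\pi)}\left(1+\sum_{k=1}^{5}\frac{G_k(\xi,\eta)}{\psi_T^{k/2}}\right) + O\!\left(\psi_T^{-3}\right) $$
uniformly on compact sets, where the $G_k$ are explicit polynomials built from cumulants of the individual Euler factors. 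Fourier inversion then converts this into the desired expansion \eqref{eqn:cor1.5:HL} for every Schwartz test function $F$, with each $g_k$ arising as the inverse Fourier transform of $G_k$ paired against the Gaussian.

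The main obstacle is the passage from smooth $F$ to the sharp indicator $\mathbf 1_{[a,b]\times[c,d]}$. I would sandwich it between $C^\infty$-majorants and minorants $F^\pm$ that agree with the indicator outside an $\varepsilon$-strip around $\partial([a,b]\times[c,d])$ and whose $j$-th derivatives satisfy $\|\partial^j F^\pm\|_\infty = O(\varepsilon^{-j})$. Applying the smooth expansion to each $F^\pm$ yields a Fourier-inversion error of the form $O(\varepsilon^{-A}\psi_T^{-3})$ for a fixed $A$ dictated by the number of derivatives needed in the inversion, while boundedness of the Gaussian and the $g_k$ forces the strip itself to contribute $O(\varepsilon)$. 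A careful choice of cut-off, using smoothness of $g_k\,e^{-\pi(x^2+y^2)}$, lets one take $\varepsilon\asymp(\log\log T)^{-3}$ and absorb all errors into $O((\log\log T)^{-3})$. The delicate technical point throughout is keeping the expansion of $\Phi_T$ valid to accuracy $\psi_T^{-3}$ on a $(\xi,\eta)$-window large enough to Fourier-invert the roughened $F^\pm$, which requires a two-scale splitting of $P_T$ into a very short main polynomial and a longer tail treated by a large-deviation moment bound.
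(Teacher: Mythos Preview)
This statement is quoted from \cite{HL} and is not re-proved here; what the paper proves is the sharper Theorem~\ref{main theorem}, which of course implies the cited result. So the relevant comparison is between your outline and the paper's route to the main theorem.

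The paper takes a genuinely different path. Instead of approximating $\log\zeta(\sigma_T+it)$ by a Dirichlet polynomial and computing moments of the $t$-average directly, it invokes the discrepancy bound \eqref{eqn disc} (itself from \cite{HL}) to replace the $t$-distribution by the random Euler-product model $\log\zeta(\sigma_T,X)$ with an error $O((\log T)^{-\eta})$ that is already negligible for \emph{every} axis-parallel rectangle. All remaining work is then done on the random side: the model has an honest density $F_{\sigma_T}$, whose Fourier transform factors over primes; Lemmas~\ref{lemma1}--\ref{lemma3} expand this transform, and Lemma~\ref{lemma4} inverts it pointwise into a Hermite series for $F_{\sigma_T}$. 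Integrating the density over $[a,b]\times[c,d]$ is then immediate --- no smoothing of the indicator is ever needed. Your approach, by contrast, stays on the arithmetic side and must therefore confront the sharp-cutoff problem by hand; it is more self-contained (it does not presuppose the discrepancy estimate) but correspondingly heavier.

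There is also a concrete gap in your smoothing step. You propose majorant/minorants $F^\pm$ with $\|\partial^j F^\pm\|_\infty\ll\varepsilon^{-j}$, a Fourier-inversion error of shape $O(\varepsilon^{-A}\psi_T^{-3})$, a strip error $O(\varepsilon)$, and then set $\varepsilon\asymp(\log\log T)^{-3}$. Since $\psi_T\asymp\log\log T$, this choice makes the Fourier-inversion error of order $(\log\log T)^{3A-3}$, which for any $A\ge1$ is \emph{larger} than the claimed $O((\log\log T)^{-3})$; the two errors simply do not balance at the target accuracy. To rescue this you would need either (i) uniform control of the characteristic function on a window of size $\asymp\varepsilon^{-1}$ in the rescaled Fourier variable (equivalently, a large-deviation bound showing Gaussian decay of $\Phi_T$ well beyond compact sets), or (ii) to push the Edgeworth expansion several orders further so that the optimized error $\psi_T^{-3/(A+1)}$ still beats $(\log\log T)^{-3}$. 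The paper's use of the discrepancy bound sidesteps exactly this difficulty: once one is on the random side, the density exists, the characteristic function has genuine Gaussian decay for all frequencies, and the box can be integrated against $F_{\sigma_T}$ directly.
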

Our main theorem improves it by finding lower order terms and expressing the integral of \eqref{eqn:cor1.5:HL} in terms of Hermite polynomials.
\begin{theorem}\label{main theorem}
Let $0<\theta< \frac12 $, $a<b$ and $c<d$ be real numbers. There exist constants $ \epsilon, \eta>0$ and a sequence $\{ d_{k,\ell}\}_{k,\ell \geq 0 } $ of real numbers  such that 
\begin{align*}
\frac1T \meas & \{ t \in [T,2T] :  \frac{\log\zeta(\sigma_T+it )}{\sqrt{ \pi \psi_T } } \in[a,b] \times [c,d] \} \\
&=  \sum_{ k+ \ell \leq \epsilon \psi_T   } \frac{d_{k,\ell} }{      \sqrt{\psi_T} ^{k+\ell}} \int_{c }^{d } \int_{a  }^{b }   e^{ - \pi ( x^2 + y^2 ) }   H_k (\sqrt{ \pi  } x)H_\ell(\sqrt{ \pi  } y)  dx dy +    O \bigg( \frac{ 1}{ ( \log T)^{\eta }} \bigg)
\end{align*}
  as $ T \to \infty$, where $H_n (x)$ is the $n$-th Hermite polynomial defined by 
\begin{equation}\label{def Hermite poly}
 H_n (x) :=  (-1)^n e^{x^2} \frac{d^n}{dx^n} ( e^{-x^2}). 
 \end{equation}
 Moreover, $d_{0,0}=1$ and $ d_{k, \ell}=0 $ for $ k+\ell = 1,2$. 
\end{theorem}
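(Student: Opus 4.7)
My plan is to pass to the joint characteristic function, derive an asymptotic expansion of it as a Gaussian times a polynomial in $1/\sqrt{\psi_T}$, and then Fourier-invert term by term, using the identity
\[
\int_{-\infty}^{\infty} e^{-\pi x^2} H_n(\sqrt{\pi}\,x)\, e^{-iux}\,dx \;=\; \left(-\frac{iu}{\sqrt{\pi}}\right)^{\!n} e^{-u^2/(4\pi)}
\]
(immediate from \eqref{def Hermite poly}) to translate the Fourier-side expansion into the claimed Hermite expansion on the density side. Concretely, set
\[
\Phi_T(u,v) := \frac{1}{T}\int_T^{2T} \exp\!\left(\frac{iu\,\re\log\zeta(\sigma_T+it) + iv\,\im\log\zeta(\sigma_T+it)}{\sqrt{\pi\psi_T}}\right) dt,
\]
and target an asymptotic formula
\[
\Phi_T(u,v) \;=\; e^{-(u^2+v^2)/(4\pi)} \sum_{j+k \leq \epsilon \psi_T} \frac{P_{j,k}(u,v)}{(\sqrt{\psi_T})^{\,j+k}} \;+\; \textup{(small)},
\]
valid on a suitable range of $(u,v)$, with explicit polynomials $P_{j,k}$ whose coefficients determine the $d_{k,\ell}$ after inversion.

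\medskip

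To produce this expansion, since $\sigma_T > 1/2$ I would first import from \cite{HL} the approximation
\[
\log\zeta(\sigma_T+it) \;=\; \sum_{k\geq 1}\sum_{p^k\leq X} \frac{p^{-ikt}}{k\, p^{k\sigma_T}} + E(t),
\]
with $E(t)$ controlled in high moments for a suitable cutoff $X$ a small power of $T$. One then replaces $\log\zeta$ by the Dirichlet polynomial inside $\Phi_T$ and compares with the probabilistic model obtained by replacing each $p^{-it}$ with an independent random variable uniform on the unit circle, exploiting the quasi-orthogonality $\int_T^{2T}\prod_p p^{-ita_p}\,dt \approx T\cdot\mathbf{1}[\sum_p a_p\log p=0]$. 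This converts $\Phi_T$ into an Euler product $\prod_{p\leq X} F_p(u,v)$ whose factors depend only on $p^{-\sigma_T}/\sqrt{\psi_T}$ and admit Taylor expansions to arbitrary order. Summing the local logarithms and re-exponentiating produces the cumulant expansion: the second cumulant reconstructs $e^{-(u^2+v^2)/(4\pi)}$ by the definition \eqref{def psi T} of $\psi_T$; the higher cumulants yield the polynomials $P_{j,k}$ scaled by $\psi_T^{-(j+k)/2}$; and exact matching of the first two cumulants against the Gaussian forces $P_{1,0},P_{0,1},P_{2,0},P_{1,1},P_{0,2}$ to vanish, giving the stated $d_{k,\ell}=0$ for $k+\ell\in\{1,2\}$.

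\medskip

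The main obstacle is uniform control of the Fourier inversion: the Taylor expansion of the local factors $F_p$ is accurate only for $|u|,|v|\lesssim \psi_T^A$ for some fixed $A$, whereas inverting against the indicator of $[a,b]\times[c,d]$ a priori integrates over all $(u,v)\in\mathbb{R}^2$. I would handle this by (i) smoothing the sharp rectangle indicator by convolution with a Schwartz mollifier of scale $(\log T)^{-\eta}$ so that its Fourier transform is negligible outside a polylogarithmic box and the induced smoothing error on the density side is $O((\log T)^{-\eta})$; (ii) on the Taylor range retaining $\epsilon\psi_T$ terms, whose remainder is of order $(\sqrt{\psi_T})^{-\epsilon\psi_T}$ and so beats any negative power of $\log T$; and (iii) on the intermediate range bounding $|\Phi_T(u,v)|$ by an almost-Gaussian estimate $\exp(-c(u^2+v^2))$ coming from high-moment estimates for the Dirichlet polynomial. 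Balancing these three error sources against the number of retained Hermite terms, and justifying the passage from the smoothed indicator back to the sharp one with loss only $(\log T)^{-\eta}$, is the most delicate bookkeeping step of the proof.
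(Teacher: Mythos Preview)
Your outline is sound and the Hermite--Fourier identity you isolate is exactly the mechanism the paper uses at the end of Lemma~\ref{lemma4}. However, the paper takes a structurally simpler route. Rather than working with the arithmetic characteristic function $\Phi_T$, it invokes at the outset the discrepancy bound \eqref{eqn disc} from \cite{HL} to replace the distribution of $\log\zeta(\sigma_T+it)$ by that of the random model $\log\zeta(\sigma_T,X)$, at cost $O((\log T)^{-\eta})$. From then on everything is purely probabilistic: the random model has an honest density $F_{\sigma_T}$, its characteristic function factors \emph{exactly} as $\prod_p J(\pi u,\pi v,p^{-\sigma_T})$, and the Gaussian decay needed to truncate the Fourier inversion is available for $\Phrhat$ from \cite[Lemma~3.5]{HL}. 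Fourier inversion is then carried out pointwise on $F_{\sigma_T}$ (Lemma~\ref{lemma4}), and the rectangle is handled simply by integrating this pointwise asymptotic --- so your smoothing step (i) never arises.

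Your approach instead rebuilds the discrepancy bound inside the proof: the Dirichlet-polynomial approximation plus moment matching you describe is precisely the engine behind \eqref{eqn disc}. That is legitimate, but it shifts the burden onto your step (iii), where you need $|\Phi_T(u,v)|\le e^{-c(u^2+v^2)}$ for the \emph{arithmetic} object. High-moment bounds alone yield only polynomial decay of a characteristic function; Gaussian decay really comes from the product structure over primes, i.e.\ from having already passed to the random model. So either (iii) implicitly reintroduces the random-model comparison, or it needs a separate argument you have not specified. If you are permitted to cite \cite{HL}, the paper's two-stage decomposition (discrepancy first, then clean probabilistic Fourier analysis on the density) is both shorter and avoids this circularity; if not, your direct route can be pushed through, but step (iii) is where the real work hides and should be made precise.
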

 The sequence $\{ d_{k,\ell} \} $ is defined by its generating function in \eqref{def dkl}.
Since $d_{k,\ell}=0$  for $ k+\ell = 1, 2$ in Theorem \ref{main theorem}, we have  the following corollary.
\begin{corollary} 
Let $0<\theta< \frac12 $, $a<b$ and $c<d$ be real numbers, then we have
$$ \frac1T \meas   \{ t \in [T,2T] :  \frac{\log\zeta(\sigma_T+it )}{\sqrt{ \pi \psi_T } } \in[a,b] \times [c,d] \}  = \int_{c }^{d } \int_{a  }^{b }   e^{ - \pi ( x^2 + y^2 ) }     dx dy  + O \bigg( \frac{ 1}{ ( \log \log T)^{3/2}} \bigg) .$$
\end{corollary}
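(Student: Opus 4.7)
The corollary is immediate from Theorem~\ref{main theorem} once we isolate the $(k,\ell)=(0,0)$ contribution. Since the theorem asserts $d_{0,0}=1$, and since $H_0\equiv 1$ gives $H_0(\sqrt{\pi}x)H_0(\sqrt{\pi}y)=1$, the $(0,0)$-term equals
\[
\int_c^d \int_a^b e^{-\pi(x^2+y^2)}\,dx\,dy,
\]
which is precisely the claimed main term. Thus it suffices to show that the remaining terms in the expansion, together with the error $O((\log T)^{-\eta})$, contribute at most $O((\log\log T)^{-3/2})$.

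The plan is to use the vanishing $d_{k,\ell}=0$ for $k+\ell\in\{1,2\}$ asserted in Theorem~\ref{main theorem}, so that the first surviving terms appear at $k+\ell=3$. There are four such terms; each is weighted by $\psi_T^{-3/2}$ and multiplied by a bounded integral depending only on $a,b,c,d$ (since $H_k(\sqrt{\pi}x)H_\ell(\sqrt{\pi}y)$ times the Gaussian is integrable on the fixed box). Combined with $\psi_T=\theta\log\log T+O(1)$ from \eqref{def psi T}, the total contribution from $k+\ell=3$ is exactly of order $(\log\log T)^{-3/2}$, matching the target error.

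Next, I would control the tail $k+\ell\geq 4$. Using the standard growth $|H_n(x)|\ll (2^n n!)^{1/2}e^{x^2/2}$ on the compact interval, the integrals are bounded by $C^{k+\ell}\sqrt{k!\,\ell!}$. Coupled with the matching decay of $d_{k,\ell}$ that one reads off from the generating function \eqref{def dkl} (which arises from a convergent expansion in the prime variables defining $\psi_T$), each term with $k+\ell=m$ is dominated by $(B/\sqrt{\psi_T})^m$. Summing over $m\geq 4$ gives a geometric series of size $O(\psi_T^{-2})$, which is negligible compared with the $m=3$ contribution. Finally, since $(\log T)^{-\eta}$ decays faster than any power of $\log\log T$, the error term from Theorem~\ref{main theorem} is also absorbed.

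The main (and only non-routine) point is verifying that $|d_{k,\ell}|$ decays rapidly enough to make the $k+\ell\geq 4$ tail summable; this is not proved in the excerpt above but should follow from the generating-function description \eqref{def dkl}. Granting this, the three sources of error---the four $k+\ell=3$ terms, the higher tail, and $O((\log T)^{-\eta})$---combine to give the stated $O((\log\log T)^{-3/2})$.
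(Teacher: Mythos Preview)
Your approach is exactly the paper's: the corollary is read off from Theorem~\ref{main theorem} once one notes $d_{0,0}=1$ and $d_{k,\ell}=0$ for $k+\ell\in\{1,2\}$, so the first nonzero correction appears at level $k+\ell=3$ and carries the factor $\psi_T^{-3/2}\asymp(\log\log T)^{-3/2}$. The paper states this in a single sentence and does not spell out the tail estimate at all, so your write-up is in fact more detailed than the original.

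Your handling of the tail $k+\ell\ge 4$ contains a misconception, however. You expect a ``matching decay of $d_{k,\ell}$'' from the generating function \eqref{def dkl} sufficient to cancel the $\sqrt{k!\,\ell!}$ growth of the Hermite integrals, so that each level $m=k+\ell$ contributes at most $(B/\sqrt{\psi_T})^m$. The paper proves no such factorial decay; Lemma~\ref{lemma3} only gives $d_{k,\ell}=O(\delta_3^{-(k+\ell)})$, i.e.\ exponential \emph{growth}, obtained by a Cauchy estimate on a fixed polydisc, and this is generically sharp for Taylor coefficients of an analytic function. With that bound the level-$m$ contribution is of order $C^{m}\sqrt{m!}\,\psi_T^{-m/2}$, not $(B/\sqrt{\psi_T})^m$. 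The tail is nonetheless harmless, but for a different reason: the sum in Theorem~\ref{main theorem} is truncated at $m\le\epsilon\psi_T$, so the ratio of successive levels is $O\big(\sqrt{(m+1)/\psi_T}\,\big)\le O(\sqrt{\epsilon})<1$, and the whole tail is dominated by a geometric series with first term $O(\psi_T^{-2})$. Hence your conclusion is correct, but the mechanism is the truncation $k+\ell\le\epsilon\psi_T$, not any decay of the coefficients $d_{k,\ell}$.
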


Remark that Hejhal in \cite[Theorem 2.1]{H} extends Theorem \ref{main theorem} to a multi-dimensional setting. For a verification, we provide a useful identity  
\begin{align*}
\sqrt{ \pi} \int_{x_1}^{x_2} e^{-\pi x^2} H_{n+1}  (\sqrt{\pi} x)  dx   &= e^{- \pi x_1^2 } H_{n}(\sqrt{\pi}x_1) - e^{-\pi x_2^2 } H_{n}(\sqrt{\pi}x_2 ) \\
&=  (- \sqrt{ \pi})^{-n} (\Phi^{(n+1)} ( x_1 ) -\Phi^{(n+1)} ( x_2 )      )   
\end{align*}
for $ n \geq 0 $, where $\Phi ( x) : =  \int_0^x e^{- \pi u^2 }du $ is defined in \cite{H}. 
However, this paper is still meaningful, since there is only a sketched proof in \cite{H} and the asymptotic expansion in our theorem is of length $ \sim \epsilon \theta  \log \log T$, 
while the expansion in \cite{H} is of any constant length. 

We also remark that the expansion in Theorem \ref{main theorem} is similar to an Edgeworth expansion in the probability theory. For further information, see Chapter 7 of \cite{C}.

We prove Theorem \ref{main theorem} at the end of Section \ref{section 2}.

\section{Estimates on the random model}\label{section 2}

The random Riemann zeta function is defined by the product
\begin{equation}\label{def zeta X}
 \zeta( \sigma , X)  : = \prod_p  \bigg( 1 - \frac{ X(p)}{p^\sigma} \bigg)^{-1} ,
 \end{equation}
 where $X(p)$ is independent and identically distributed random variables on the unit circle $|z|=1$ assigned for each prime $p$. The product converges almost surely for $ \sigma > \frac12$. 
If $ \sigma  > \frac12  $ is not too close to $ \frac12$, then  the distribution of the random model $ \log \zeta( \sigma , X) $  approximates that of   $ \log \zeta(\sigma+it)$. More precisely, the discrepancy defined by
$$ D_{\sigma }(T) := \sup_{\mathcal{R}} \left|  \frac1T \meas \{ t \in [ T, 2T] : \log \zeta( \sigma + it  ) \in \mathcal{R} \} - \P [ \log \zeta ( \sigma ,X ) \in \mathcal{R}  ]   \right|$$
is small for $ \sigma \geq \sigma_T$,  where the supremum is taken over rectangles $\mathcal{R}$ with sides parallel to the coordinate axes.  Lamzouri, Lester and Radziwi\l\l~ in \cite{LLR} show that    
 $$D_{\sigma} (T) = O\bigg( \frac{1}{ ( \log T)^{ \sigma }} \bigg) $$
  holds for fixed $ \sigma > \frac12 $, which improves earlier results of Matsumoto \cite{M1},  \cite{M2} and Matsumoto and Harman \cite{HM}. By the same method, Ha and Lee in \cite{HL} show that  for each $ 0 < \theta <  \frac12 $, there is a constant $\eta >0$ such that
  \begin{equation}\label{eqn disc}
 D_{ \sigma_T} (T) = O_\eta \bigg( \frac{1}{ ( \log T)^{ \eta }} \bigg) .
 \end{equation}

Define 
$$  \Phrand (\mathcal{B} ) :=  \P \left[  \log \zeta ( \sigma_T , X ) \in \mathcal{B} \right] $$
for a Borel set $\mathcal{B} \subset \mathbb{C}$.  It is known that this measure has a density function 
$F_{\sigma_T } $ such that
\begin{equation}\label{eqn prob density}
 \P \left[ \log\zeta(\sigma_T , X)\in\mathcal{B}\right] =\iint_{\mathcal{B}}F_{\sigma_T}(x,y)dxdy  
 \end{equation}
holds for any region $\mathcal{B}$. For a proof, see \cite[Theorem 11]{BJ} or  \cite[Proposition 3.1]{HL}.
Since we have
\begin{equation}\label{eqn clt proof 1}\begin{split}
\frac1T \meas & \{ t \in [T,2T] :  \frac{\log\zeta(\sigma_T+it )}{\sqrt{ \pi \psi_T } } \in[a,b] \times [c,d] \}  \\
&=
\P\left[ \frac{\log\zeta(\sigma_T, X )}{\sqrt{ \pi \psi_T } } \in [a,b] \times [c,d] \right]  + O \bigg(  \frac{1}{ ( \log T)^\eta } \bigg)\\
&=    \int_{c\sqrt{ \pi \psi_T } }^{d\sqrt{ \pi \psi_T } } \int_{a\sqrt{ \pi \psi_T } }^{b\sqrt{ \pi \psi_T } } F_{\sigma_T }( x,y)  dx dy    + O \bigg(  \frac{1}{ ( \log T)^\eta} \bigg)   
\end{split}\end{equation}
by \eqref{eqn disc} and \eqref{eqn prob density}, 
 it is enough to find an asymptotic for $F_{\sigma_T}( x, y ) $ to prove Theorem \ref{main theorem}. 
Since we have
  \begin{equation}\label{F Phrhat int}
   F_{ \sigma_T } ( x,y) = \iint_{\mathbb{R}^2} \Phrhat(u,v) e^{- 2 \pi i ( ux + vy) } du dv  
   \end{equation}
   by the Fourier inversion, we next estimate the Fourier transform
    $$ \Phrhat (u,v) = \E \left[ e^{  2 \pi i ( u \re( \log \zeta( \sigma_T , X) )+  v \im ( \log \zeta( \sigma_T , X) ) } \right]. $$ 
   
   By \eqref{def zeta X} we have
   \begin{equation}\label{eqn phrhat prod}
   \Phrhat (u,v) =  \prod_p J(  \pi u , \pi v, p^{-\sigma_T}) ,
   \end{equation}
where
$$ J( u,v,w ) := \E \left[ e^{ - 2  i ( u \re  \log ( 1 - wX) + v \im \log ( 1- wX)) } \right] . $$
Then we have the following lemma, which  is a modification of \cite[Lemma 3.3]{HL}.
\begin{lemma}\label{lemma1}
Let $ 0 < r   < 1 $ and  $C_r = - \frac{1 }{r}\log ( 1-r)  $. Then we have series expansions 
$$  J ( u,v,w)  = 1 + \sum_{ k , \ell \geq 1 } \frac{ i^{k+\ell}}{k! \ell!} a_{k, \ell} ( w) (u+iv)^k (u-iv)^\ell $$ 
for any $ u, v \in \mathbb{R}$ and $ 0 < w < 1 $, and 
\begin{equation}\label{log J exp}
 \log J ( u,v,w)  = \sum_{ k , \ell \geq 1 } \frac{ i^{k+\ell}}{k! \ell!} b_{k, \ell} ( w) (u+iv)^k (u-iv)^\ell 
 \end{equation}
for $ u^2 + v^2  \leq (2rC_r )^{-2} $ and $ |w| \leq r $, where the coefficients $a_{k,\ell}(w)$ and $b_{k,\ell} ( w )$ are defined by
\begin{align}
 a_{k, \ell}(w) & = \sum_{n \geq \max(k,\ell)} \bigg( \sum_{ \substack{  n_1 +\cdots +n_k = n \\n_i \geq 1  }}  \frac{1}{ n_1 \cdots n_k } \bigg) \bigg( \sum_{ \substack{   m_1 +\cdots +m_\ell = n \\ m_i \geq 1  }} \frac{1}{ m_1 \cdots m_\ell} \bigg) w^{2n} \label{def akl} ,\\
   b_{k,\ell}(w) & =  \sum_{ n  \leq \min(k,\ell)} \frac{(-1)^{n-1}}{n} \sum_{  \substack{ k_1 + \cdots k_n = k \\ \ell_1 + \cdots + \ell_n = \ell  \\ k_i, \ell_i \geq 1 } } {   k \choose { k_1 , \ldots , k_n }} { \ell \choose { \ell_1 , \ldots , \ell_n }} a_{k_1 , \ell_1 } (w) \cdots a_{ k_n , \ell_n }(w)\label{def bkl} .
   \end{align} 
Moreover, we have
\begin{enumerate}
 \item $ b_{k,\ell}(w)$ is real and  $ b_{1,1} ( w ) = \sum_{m \geq 1 } \frac{1}{m^2} w^{2m} $,
 \item $ a_{k,\ell}(w), b_{k, \ell} (w) \ll_{k,\ell} w^{ 2 \max(k,\ell)} $, 
 \item $a_{k,\ell}(w) = a_{\ell, k}(w)  $ and $b_{k,\ell}(w) = b_{\ell, k}(w)  $, 
\item $ 0 < a_{k,\ell}(w)   \leq C_r^{k+\ell} w^{k+\ell} $ and     
$ | b_{k,\ell}(w) | \leq   C_r^{k+\ell} \min(k,\ell)^{k+\ell  } w^{k+\ell} $  for $ 0 < w \leq r $.
\end{enumerate}
\end{lemma}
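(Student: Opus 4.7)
The plan is to express $J(u,v,w)$ as an explicit expectation involving complex powers of $1-wX$, expand each factor as a power series in $wX$ and $w\overline{X}$, and then take the expectation term by term using the orthogonality relation $\E[X^{m_1}\overline{X}^{m_2}]=\delta_{m_1,m_2}$ on the unit circle. Concretely, since $u\re(z)+v\im(z)=\frac12((u-iv)z+(u+iv)\overline{z})$ and $\overline{\log(1-wX)}=\log(1-w\overline{X})$ for real $w$, we get
$$ J(u,v,w)=\E\bigl[(1-wX)^{-i(u-iv)}(1-w\overline{X})^{-i(u+iv)}\bigr]. $$
I would then apply $(1-z)^{-\alpha}=\sum_{k\ge 0}\frac{\alpha^k}{k!}(-\log(1-z))^k$ and expand each $(-\log(1-z))^k=\sum_{m\ge k}(\sum_{n_1+\cdots+n_k=m,\,n_i\ge 1}\tfrac{1}{n_1\cdots n_k})z^m$. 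Multiplying the two resulting series, taking expectation and using $\delta_{m_1,m_2}$ forces a common index $n\ge \max(k,\ell)$ and reproduces the stated formula for $J$ with coefficients exactly $a_{k,\ell}(w)$. Absolute convergence in $u,v,w$ is automatic since $|\log(1-wX)|\le -\log(1-w)$ on $|X|=1$.

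For the logarithmic expansion, I would set $S:=J-1$, write $\log J=\sum_{n\ge 1}\frac{(-1)^{n-1}}{n}S^n$, and expand $S^n$ by the multinomial theorem, grouping terms according to the totals $k=\sum_i k_i$ and $\ell=\sum_i \ell_i$; this reproduces \eqref{def bkl} termwise, with the range $n\le\min(k,\ell)$ coming from the constraint $k_i,\ell_i\ge 1$. Convergence requires $|S|<1$: using the bound from (4) to be proved below, one gets $|S|\le(e^{C_r w\sqrt{u^2+v^2}}-1)^2$, and the hypotheses $w\le r$ together with $u^2+v^2\le(2rC_r)^{-2}$ give $C_rw\sqrt{u^2+v^2}\le 1/2<\log 2$, which suffices.

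Properties (1)–(4) then come out as follows. The symmetry in (3) is visible from the definitions by swapping the two inner products for $a_{k,\ell}$ and by relabeling for $b_{k,\ell}$. For (1), taking $k=\ell=1$ in \eqref{def bkl} forces $n=1$, so $b_{1,1}(w)=a_{1,1}(w)=\sum_{m\ge 1}m^{-2}w^{2m}$. The order estimate in (2) follows from reading off the lowest power of $w$ in \eqref{def akl} (namely $2\max(k,\ell)$) and noting that this bound propagates through $b_{k,\ell}$ because in each product $\prod_i a_{k_i,\ell_i}$ one has $\sum_i\max(k_i,\ell_i)\ge\max(k,\ell)$. For (4), the key input is the pointwise bound $|{-\log(1-wX)}|\le -\log(1-w)=wC_w$ on $|X|=1$ combined with the monotonicity $C_w\le C_r$ for $w\le r$, which gives $0<a_{k,\ell}(w)\le (wC_r)^{k+\ell}$; the bound on $b_{k,\ell}(w)$ then follows from $\sum_{k_1+\cdots+k_n=k,\,k_i\ge 1}\binom{k}{k_1,\ldots,k_n}\le n^k$ applied to both the $k$-composition and the $\ell$-composition, together with $n\le\min(k,\ell)$ and $\sum_{n=1}^{\min(k,\ell)}n^{k+\ell-1}\le\min(k,\ell)^{k+\ell}$.

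The main obstacle I anticipate is not any individual step but the bookkeeping: I need to certify that each rearrangement of the double/multiple power series is carried out in a genuine domain of absolute convergence, so that the formal identification of the coefficients in $\log J$ with the combinatorial quantities $b_{k,\ell}(w)$ is rigorous, and that the uniform bounds in (4) are strong enough to feed back into the convergence region used for \eqref{log J exp}.
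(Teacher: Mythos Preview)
Your proposal is correct and essentially reconstructs the argument the paper cites from \cite{HL}: the paper's own proof is simply ``this is Lemma~3.3 of \cite{HL}'' together with the single new remark that $b_{k,\ell}(w)=b_{\ell,k}(w)$ follows from \eqref{def bkl} and the symmetry $a_{k,\ell}(w)=a_{\ell,k}(w)$. Your derivation---rewriting $J$ as $\E[(1-wX)^{-i(u-iv)}(1-w\overline X)^{-i(u+iv)}]$, expanding via $e^{\alpha(-\log(1-z))}$, and using $\E[X^{m_1}\overline X^{m_2}]=\delta_{m_1,m_2}$---is exactly the computation underlying that cited lemma, so the approaches coincide; you are supplying the details the paper outsources.
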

 \begin{proof}
	The lemma is basically Lemma 3.3 of \cite{HL}. (See \cite[page 852, lines 10 and 20]{HL} for  \eqref{def akl} and \eqref{def bkl}.) The condition $b_{k, \ell} (w ) = b_{\ell, k}(w)$ is the only statement, which is not proved therein. However, this easily follows from \eqref{def bkl} and the fact that $a_{k, \ell} (w ) = a_{\ell, k}(w)$.
 \end{proof}

\begin{lemma}\label{lemma2}
Let $\psi_T$ be as in \eqref{def psi T}. Define 
\begin{equation}\label{def tildebkl}
 \tilde{b}_{k , \ell} := \frac{ (\pi i)^{k+\ell}}{k! \ell!}\sum_p  b_{k, \ell} ( p^{-1/2} ) ,
 \end{equation}
where $ b_{k, \ell}(w)$ is defined in \eqref{def bkl}. 
 Then there is a constant $\delta_1 > 0 $
$$ \sum_p \log J( \pi u , \pi v, p^{- \sigma_T}) = - \pi^2 (u^2+v^2) \psi_T + \sum_{ \substack{ k , \ell \geq 1 \\ k+\ell \geq 3 }} \tilde{b}_{k , \ell} (u+iv)^k (u-iv)^\ell  + O\bigg( \frac{ 1}{ ( \log T)^\theta}   \bigg)$$
for $u^2 + v^2 \leq  \delta_1$.
\end{lemma}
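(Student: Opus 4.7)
Plan. The strategy is to apply Lemma~\ref{lemma1} at $w=p^{-\sigma_T}$ for each prime $p$, sum over $p$, isolate the $(k,\ell)=(1,1)$ term (which contributes exactly $-\pi^2(u^2+v^2)\psi_T$), and show that for the remaining $(k,\ell)$ with $k+\ell\geq 3$ one may replace $p^{-\sigma_T}$ by $p^{-1/2}$ inside $b_{k,\ell}$ at a total cost of $O((\log T)^{-\theta})$.

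Setup. First I would fix $r\in(2^{-1/2},1)$ so that $p^{-\sigma_T}\leq 2^{-\sigma_T}<r$ for every prime and every large $T$, and take $\delta_1>0$ with $\delta_1\leq(2\pi rC_r)^{-2}$ and small enough that Stirling's estimate together with Lemma~\ref{lemma1}~(4) majorises
\[
\sum_{k+\ell=N,\,k,\ell\geq 1}\frac{\min(k,\ell)^{k+\ell}}{k!\,\ell!}\ll\frac{e^N}{\sqrt{N}}
\]
so that $\sum_{N\geq 3}(\pi eC_r\sqrt{\delta_1}\,r)^N/\sqrt{N}$ converges. This simultaneously ensures that Lemma~\ref{lemma1} applies to each $\log J(\pi u,\pi v,p^{-\sigma_T})$ for $u^2+v^2\leq\delta_1$ and that the triple sum over $(p,k,\ell)$ converges absolutely, justifying every interchange of summation below. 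The $(k,\ell)=(1,1)$ term then yields $-\pi^2(u^2+v^2)\sum_p b_{1,1}(p^{-\sigma_T})=-\pi^2(u^2+v^2)\psi_T$ by part (1) of the lemma.

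Replacement step. For $k+\ell\geq 3$ I would use
\[
b_{k,\ell}(p^{-\sigma_T})-b_{k,\ell}(p^{-1/2})=-\log p\int_{1/2}^{\sigma_T}p^{-s}\,b_{k,\ell}'(p^{-s})\,ds,
\]
and after re-summing over $(k,\ell)\geq(1,1)$ with $k+\ell\geq 3$ inside the integral, recognise the integrand (apart from the $-\log p$ factor) as $p^{-s}\partial_w R(u,v,p^{-s})$, where
\[
R(u,v,w):=\log J(\pi u,\pi v,w)+\pi^2(u^2+v^2)b_{1,1}(w)
\]
is the Lemma~\ref{lemma1} expansion beyond the $(1,1)$-term. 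Every $b_{k,\ell}$ appearing in $R$ has $\max(k,\ell)\geq 2$, hence $b_{k,\ell}(w)=O(w^4)$ at the origin by part (2); the Stirling majorant from the setup step then shows $R(u,v,w)/w^4$ is holomorphic and uniformly bounded for $|w|\leq r$ and $u^2+v^2\leq\delta_1$. A Cauchy estimate applied to $R/w^4$ gives $|\partial_w R(u,v,w)|\ll|w|^3$ uniformly for $|w|$ bounded away from $r$.

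Final bound and main obstacle. Combining the above, the error from the replacement is controlled by
\[
\int_{1/2}^{\sigma_T}\sum_p\log p\cdot p^{-s}\,\bigl|\partial_w R(u,v,p^{-s})\bigr|\,ds\ll\int_{1/2}^{\sigma_T}\sum_p\frac{\log p}{p^{4s}}\,ds\ll\sigma_T-\tfrac12=(\log T)^{-\theta},
\]
with the finitely many small primes for which $p^{-1/2}$ is close to $r$ absorbed by bounding their individual contributions directly via the displayed identity (each gives $O((\log T)^{-\theta})$). The main obstacle is arranging the simultaneous $O(|w|^3)$ vanishing and uniform-in-$(u,v)$ boundedness of $\partial_w R$; this is precisely what forces $\delta_1$ to be chosen small enough to tame the factor $\min(k,\ell)^{k+\ell}$ in Lemma~\ref{lemma1}~(4) through Stirling, and once this is in place the remaining estimates reduce to routine bounds on sums $\sum_p (\log p)/p^{4s}$ over primes.
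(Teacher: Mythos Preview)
Your argument is correct and reaches the same conclusion, but the replacement step is carried out quite differently from the paper.

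The paper works term by term: it first bounds $a_{k,\ell}(p^{-1/2})-a_{k,\ell}(p^{-\sigma_T})$ directly from the power series \eqref{def akl} via the elementary inequality $1-e^{-x}\le x$ (together with $\log p^n\le C(\epsilon)p^{2n\epsilon}$), then propagates this to $b_{k,\ell}$ through the explicit formula \eqref{def bkl} using a telescoping product identity, and finally sums over $(p,k,\ell)$ with the bound from Lemma~\ref{lemma1}(4) and Stirling. Your route instead packages all the $k+\ell\ge 3$ terms into the single holomorphic function $R(u,v,w)$, expresses the difference as $-\log p\int_{1/2}^{\sigma_T}p^{-s}\partial_w R(u,v,p^{-s})\,ds$, and then exploits the fourth-order vanishing of $R$ at $w=0$ (from Lemma~\ref{lemma1}(2)) together with the uniform boundedness of $R$ on $|w|\le r$ (from Lemma~\ref{lemma1}(4) and your choice of $\delta_1$) to obtain $|\partial_w R|\ll |w|^3$ via a Schwarz--Cauchy argument. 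This is a cleaner, more analytic substitute for the paper's combinatorial bookkeeping; it bypasses the explicit manipulation of \eqref{def bkl} entirely, at the cost of invoking a Schwarz-type lemma. One small remark: since you chose $r>2^{-1/2}$, every $p^{-s}$ with $s\ge 1/2$ already lies in a fixed compact subset of $|w|<r$, so the separate treatment of ``small primes close to $r$'' is not actually needed.
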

\begin{proof}
Since the inequality  $ 1-e^{-x} \leq x $ holds for any $ x \geq 0 $, we have 
\begin{equation}\label{lemma2 eqn1}
0 <  1 -  p^{-  \frac{ 2n}{ ( \log T)^\theta}} \leq  \frac{ 2n \log p }{ ( \log T)^\theta}  
\end{equation}
for any prime $p$ and integer $n > 0 $.   
For any $ \epsilon>0$, there is a constant $ C(\epsilon)>0$ such that  $ \log x \leq C(\epsilon) x^\epsilon $ for all $ x \geq 1 $.  Thus, we have
\begin{equation}\label{lemma2 eqn2}
 \log p^n   \leq C(2 \epsilon) p^{2n\epsilon } .
 \end{equation}
 By  \eqref{def akl}, \eqref{lemma2 eqn1}, \eqref{lemma2 eqn2}, and (4) of Lemma \ref{lemma1}, we have
\begin{equation}\label{eqn akl 12 sigmaT}\begin{split} 
0<  a_{k,\ell} & ( p^{-1/2}) - a_{k, \ell}  ( p^{-\sigma_T})   \\
& \leq \frac{2 C(2\epsilon)}{( \log T)^\theta} \sum_{n \geq \max(k,\ell)} \bigg( \sum_{ \substack{  n_1 +\cdots +n_k = n \\n_i \geq 1  }}  \frac{1}{ n_1 \cdots n_k } \bigg) \bigg( \sum_{ \substack{   m_1 +\cdots+  m_\ell = n \\ m_i \geq 1  }} \frac{1}{ m_1 \cdots m_\ell} \bigg)   \frac{ 1  }{p^{ (1-2\epsilon)n }  } \\
& = \frac{2 C(2\epsilon)}{( \log T)^\theta} a_{k,\ell} ( p^{-1/2+\epsilon}) \leq  \frac{2 C(2\epsilon)}{( \log T)^\theta}  C_r^{k+\ell} p^{ - (1/2-\epsilon) (k+\ell)}
\end{split}\end{equation}
for $ k, \ell \geq 1 $, any prime $p$ and any $ \epsilon>0 $ with a choice $ r = 2^{-1/2+\epsilon} $. 
Since
$$ \prod_{j=1}^n x_j - \prod_{j=1}^n y_j =  \sum_{j=1}^n  \bigg(  \prod_{i\geq j} x_i \prod_{i<j} y_i - \prod_{i >  j  } x_i \prod_{i \leq j  } y_i\bigg) ,$$
  we have
\begin{align*}
 0< \prod_{j=1}^n & a_{k_j, \ell_j } ( p^{-1/2}) - \prod_{j=1}^n a_{k_j, \ell_j } ( p^{-\sigma_T})    \leq \sum_{j=1}^n (  a_{k_j, \ell_j } ( p^{-1/2}) -   a_{k_j, \ell_j } ( p^{-\sigma_T}) ) \prod_{i \neq j } a_{k_i, \ell_i } ( p^{-1/2})  \\
 & \leq \sum_{j=1}^n  \frac{2 C(2\epsilon)}{( \log T)^\theta}  C_r^{k_j+\ell_j} p^{ - (1/2-\epsilon) (k_j+\ell_j )}    \prod_{i \neq j } C_r ^{k_i + \ell_i}   p^{-1/2(k_i + \ell_i )}    \\
 & \leq   n  \frac{2 C(2\epsilon)}{( \log T)^\theta}  C_r^{\sum_j (k_j+  \ell_j)} p^{ - (1/2-\epsilon) \sum_j  (k_j+\ell_j )}  
\end{align*}
by \eqref{eqn akl 12 sigmaT} and (4) of Lemma \ref{lemma1}. 
The equation   \eqref{def bkl} and the above inequality imply that
\begin{align*}
| b_{k, \ell }& ( p^{-1/2} ) - b_{k, \ell } ( p^{-\sigma_T} ) |  \\
\leq &  \sum_{ n  \leq \min(k,\ell)} \frac{1}{n} \sum_{  \substack{ k_1 + \cdots k_n = k \\ \ell_1 + \cdots + \ell_n = \ell  \\ k_i, \ell_i \geq 1 } } {   k \choose { k_1 , \ldots , k_n }} { \ell \choose { \ell_1 , \ldots , \ell_n }}  \left| \prod_{j=1}^n   a_{k_j, \ell_j } ( p^{-1/2}) - \prod_{j=1}^n a_{k_j, \ell_j } ( p^{-\sigma_T}) \right| \\
\leq &  \sum_{ n  \leq \min(k,\ell)} n^{k+\ell}  \frac{2 C(2\epsilon)}{( \log T)^\theta}  C_r^{k+\ell } p^{ - (1/2-\epsilon)   (k +\ell  )} \\
\leq &   \frac{3 C(2\epsilon)}{( \log T)^\theta}  ( \min(k,\ell))^{k+\ell}   C_r^{k+\ell } p^{ - (1/2-\epsilon)   (k +\ell  )}.
\end{align*}
By Stirling's formula and the above inequality with $ 0 < \epsilon < \frac16$, we have
\begin{multline}\label{lemma2 eqn1}
\sum_p \sum_{ k+\ell \geq 3 } \frac{ \pi^{k+\ell} (u^2+v^2)^{(k+\ell)/2} }{k! \ell!} | b_{k, \ell} (p^{-1/2} ) - b_{k,\ell}( p^{-\sigma_T})|  \\
\ll   \frac{1}{( \log T)^\theta} \sum_p \sum_{ k+\ell \geq 3 }  
 \bigg(  \frac{ \pi \sqrt{\delta_1}e C_r }{p^{1/2-\epsilon}}\bigg)^{k+\ell}\ll \frac{1}{( \log T)^\theta}
\end{multline}
for $ u^2 + v^2 \leq \delta_1 $, where $\delta_1$ is a constant satisfying $  \frac{ \pi \sqrt{\delta_1}e C_r }{2^{1/2-\epsilon}} < 1$.
By \eqref{log J exp}, we have  
\begin{equation}\label{lemma2 eqn2}
\sum_p \log    J( \pi u , \pi v , p^{- \sigma_T } ) =      \sum_{ k , \ell \geq 1 } \frac{ (\pi i)^{k+\ell}}{k! \ell!} (u+iv)^k (u-iv)^\ell \sum_p  b_{k, \ell} ( p^{-\sigma_T} )  
\end{equation}
 for $u^2 + v^2 \leq \delta_1 $ if $ \delta_1 \leq ( \pi \sqrt{2} C_{1/\sqrt{2}} )^{-2}$. By \eqref{lemma2 eqn1}, \eqref{lemma2 eqn2} and the identity $ \psi_T = \sum_p b_{1,1} (p^{-\sigma_T}) $,  the lemma follows.

\end{proof}

\begin{lemma}\label{lemma3}
There are   constants $\delta_2, \delta_3 >0 $ and a sequence $ \{d_{k,\ell} \}_{k, \ell \geq 0 }$ of real numbers such that 
\begin{equation}\label{eqn lemma3 main}
 \Phrhat(u,v) =    e^{- \pi^2 (u^2+v^2) \psi_T } \bigg( \sum_{ k, \ell \geq 0   } (2 \pi i )^{k+\ell} d_{k,\ell} u^k v^\ell    + O\bigg( \frac{ 1}{ ( \log T)^\theta}   \bigg)  \bigg) 
\end{equation}
for $ u^2 + v^2 \leq \delta_2 $, where $d_{0,0}=1$, $ d_{k,\ell} = 0 $ for $ k+\ell=1,2$ and $d_{k,\ell} = O ( \delta_3^{ -(k+\ell)} ) $ for $ k+\ell \geq 3 $.
\end{lemma}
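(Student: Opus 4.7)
The plan is to exponentiate the expansion in Lemma~\ref{lemma2}. Setting
$$f(u,v) := \sum_{\substack{k,\ell \geq 1 \\ k+\ell \geq 3}} \tilde b_{k,\ell}(u+iv)^k(u-iv)^\ell,$$
the product formula \eqref{eqn phrhat prod} combined with Lemma~\ref{lemma2} yields
$$\Phrhat(u,v) = \exp\Bigl(-\pi^2(u^2+v^2)\psi_T + f(u,v) + O((\log T)^{-\theta})\Bigr)$$
on the disk $u^2+v^2 \leq \delta_1$. After extracting the Gaussian prefactor, the task reduces to showing that $e^{f(u,v)}$ is a bounded analytic function on a smaller disk admitting a convergent power series expansion $\sum c_{k,\ell}u^kv^\ell$, from which one sets $d_{k,\ell}:=c_{k,\ell}/(2\pi i)^{k+\ell}$.

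First I would control $f$ uniformly on a smaller disk. Lemma~\ref{lemma1}(4) with $r=1/\sqrt 2$ gives $|b_{k,\ell}(p^{-1/2})| \leq C_r^{k+\ell}\min(k,\ell)^{k+\ell}p^{-(k+\ell)/2}$. Since $k+\ell \geq 3$, the prime sum is bounded by the absolute constant $\sum_p p^{-3/2}$. Together with the $k!\ell!$ in the denominator of $\tilde b_{k,\ell}$, Stirling's estimate $k!\ell! \gg (k/e)^k(\ell/e)^\ell$ absorbs the $\min(k,\ell)^{k+\ell}$ factor and produces a uniform bound $|\tilde b_{k,\ell}| \ll A^{k+\ell}$ for some absolute $A>0$. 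Since $|(u+iv)^k(u-iv)^\ell|=(u^2+v^2)^{(k+\ell)/2}$, the series for $f$ converges absolutely and $|f(u,v)|\leq M$ for some $M$ on any disk $u^2+v^2\leq \delta_2$ with $A\sqrt{\delta_2}$ sufficiently small. Since $f$ is bounded, we may factor
$$\exp\bigl(f(u,v)+O((\log T)^{-\theta})\bigr) = e^{f(u,v)}\bigl(1+O((\log T)^{-\theta})\bigr) = e^{f(u,v)}+O((\log T)^{-\theta}),$$
with an implicit constant depending on $M$ only.

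Expanding $e^{f(u,v)} = \sum_{k,\ell\geq 0} c_{k,\ell}u^kv^\ell$ as a convergent power series gives $c_{0,0}=1$ since $f(0,0)=0$, and $c_{k,\ell}=0$ for $k+\ell \in \{1,2\}$ since $f$ has no terms of those total degrees. Setting $d_{k,\ell}:=c_{k,\ell}/(2\pi i)^{k+\ell}$ then establishes \eqref{eqn lemma3 main} with $d_{0,0}=1$ and $d_{k,\ell}=0$ for $k+\ell\in\{1,2\}$. For the coefficient growth, $e^{f(u,v)}$ extends to a bounded holomorphic function of two complex variables on a polydisk $\{|u|,|v|\leq\rho\}$ for some $\rho>0$; Cauchy's inequalities then give $|c_{k,\ell}|\ll \rho^{-(k+\ell)}$, hence $|d_{k,\ell}|\ll \delta_3^{-(k+\ell)}$ with $\delta_3:=2\pi\rho$. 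Reality of $d_{k,\ell}$ follows from the symmetries $\Phrhat(u,-v)=\Phrhat(u,v)$ (a consequence of $\overline{X(p)}$ having the same distribution as $X(p)$) and $\Phrhat(-u,-v)=\overline{\Phrhat(u,v)}$, which propagate through the identity $\tilde b_{k,\ell}=\tilde b_{\ell,k}$ (from Lemma~\ref{lemma1}(3)) and a short bookkeeping of powers of $i$ in the expansion of $(u+iv)^k(u-iv)^\ell$. The main obstacle is the coefficient bound on $\tilde b_{k,\ell}$: the $\min(k,\ell)^{k+\ell}$ growth from Lemma~\ref{lemma1}(4) looks alarming at first, but it is defeated by the factorials in the denominator and the rapid prime-sum decay afforded by $k+\ell\geq 3$.
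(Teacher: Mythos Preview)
Your proof is correct and follows essentially the same route as the paper: exponentiate Lemma~\ref{lemma2}, bound the exponent $f$ via Lemma~\ref{lemma1}(4) and Stirling so that $e^f$ is bounded analytic on a polydisk, expand in a power series, and read off the coefficient growth from Cauchy's estimate. The only cosmetic difference is the argument for reality of the $d_{k,\ell}$: the paper substitutes $u=x/(2\pi i)$, $v=y/(2\pi i)$ so that $\tilde b_{k,\ell}(2\pi i)^{-(k+\ell)}$ becomes real and then checks directly, via $b'_{k,\ell}=b'_{\ell,k}$, that the exponent is a real power series in $x,y$; you instead invoke the characteristic-function symmetries $\Phrhat(u,-v)=\Phrhat(u,v)$ and $\Phrhat(-u,-v)=\overline{\Phrhat(u,v)}$ together with $\tilde b_{k,\ell}=\tilde b_{\ell,k}$, which after the bookkeeping of $i$-powers yields the same conclusion (and incidentally also gives $d_{k,\ell}=0$ whenever $k$ or $\ell$ is odd).
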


\begin{proof}
By \eqref{eqn phrhat prod} and Lemma \ref{lemma2}, we have
$$
 \Phrhat(u,v) =  e^{- \pi^2 (u^2+v^2) \psi_T } g(u,v)  \bigg(1  + O\bigg( \frac{ 1}{ ( \log T)^\theta}   \bigg)  \bigg)  $$
for $ u^2 + v^2 \leq \delta_1$, where
$$g(u,v) := \exp \bigg(    \sum_{ \substack{ k , \ell \geq 1 \\ k+\ell \geq 3 }} \tilde{b}_{k , \ell} (u+iv)^k (u-iv)^\ell  \bigg).$$
By \eqref{def tildebkl}, Lemma \ref{lemma1} and Stirling's formula, the sum   
\begin{equation} \label{est doube sum tilde b}\begin{split}
  \sum_{ \substack{ k , \ell \geq 1 \\ k+\ell \geq 3 }} |\tilde{b}_{k , \ell} | | (u+iv)^k (u-iv)^\ell | 
\leq & \sum_{ \substack{ k , \ell \geq 1 \\ k+\ell \geq 3 }} \frac{ (\pi \sqrt{u^2 + v^2  })^{k+\ell}}{k! \ell!} \sum_p   C_{1/\sqrt{2}}^{k+\ell} \min(k,\ell)^{k+\ell } p^{-(k+\ell)/2}  \\
\ll &  \sum_p   \sum_{ \substack{ k , \ell \geq 1 \\ k+\ell \geq 3 }}   \bigg(  \frac{   C_{1/\sqrt{2}}      \pi e  \sqrt{u^2 + v^2  } }{\sqrt{p} }  \bigg)^{k+\ell}      
\end{split} \end{equation}
is convergent and bounded for $ u^2 + v^2 \leq \delta_2 $ provided that $   C_{1/\sqrt{2}}      \pi e  \sqrt{\delta_2  } < \sqrt{2}     $. Thus, we can find a power series expansion of  $g(u,v)$
 for $ u^2 + v^2 \leq \delta_2$. 
 
  Let $ b'_{k, \ell} = \tilde{b}_{k,\ell} ( 2 \pi i )^{-k-\ell}$, then we see that
  \begin{equation}\label{def gxy}
g \bigg( \frac{x}{2 \pi i },   \frac{y}{2 \pi i } \bigg)  = \exp \bigg(    \sum_{ \substack{ k , \ell \geq 1 \\ k+\ell \geq 3 }} b'_{k , \ell} (x+iy)^k (x-iy)^\ell  \bigg).
\end{equation}
 Since 
 $$  b'_{k,\ell}= \frac{ 1}{2^{k+\ell} k! \ell!}\sum_p  b_{k, \ell} ( p^{-1/2} ) $$
 by \eqref{def tildebkl}, we have that $b'_{k,\ell} = b'_{\ell, k}$ and $ b'_{k,\ell}$ is real for every $k, \ell $ by  Lemma \ref{lemma1}. Since
 \begin{align*}
   \sum_{ \substack{ k , \ell \geq 1 \\ k+\ell \geq 3 }}&  b'_{k , \ell} (x+iy)^k (x-iy)^\ell  -   \sum_{ \substack{ k , \ell \geq 1 \\ k+\ell \geq 3 }} \overline{ b'_{k , \ell} (x+iy)^k (x-iy)^\ell } \\& = \sum_{ \substack{ k , \ell \geq 1 \\ k+\ell \geq 3 }} b'_{k , \ell} (x+iy)^k (x-iy)^\ell  -    \sum_{ \substack{ k , \ell \geq 1 \\ k+\ell \geq 3 }}  b'_{k , \ell} (x-iy)^k (x+iy)^\ell    \\
   & = \sum_{ \substack{ k , \ell \geq 1 \\ k+\ell \geq 3 }} b'_{k , \ell} (x+iy)^k (x-iy)^\ell  -    \sum_{ \substack{ k , \ell \geq 1 \\ k+\ell \geq 3 }}  b'_{\ell  , k } (x-iy)^k (x+iy)^\ell    = 0,
   \end{align*}
   the sum $  \sum_{ \substack{ k , \ell \geq 1 \\ k+\ell \geq 3 }} b'_{k , \ell} (x+iy)^k (x-iy)^\ell $ is a power series in $ x $ and $y$ with real coefficients.
Therefore, there is a  sequence $ \{ d_{k,\ell} \}_{k,\ell \geq 0 }$ of real numbers such that
\begin{equation}\label{def dkl}
  \sum_{ k, \ell \geq 0  }   d_{k,\ell} x^k y^\ell := \exp \bigg(    \sum_{ \substack{ k , \ell \geq 1 \\ k+\ell \geq 3 }} b'_{k , \ell} (x+iy)^k (x-iy)^\ell  \bigg) .  
  \end{equation}
By \eqref{def gxy} and \eqref{def dkl}, we have 
$$ g(u,v) =  \sum_{ k, \ell \geq 0  } ( 2 \pi i )^{k+\ell} d_{k,\ell} u^k v^\ell    .$$
This proves \eqref{eqn lemma3 main}.

By expanding the right hand side of \eqref{def dkl}, it is easy to see that $ d_{0,0}=1 $ and $ d_{k,\ell}=0 $ for $k+\ell = 1,2$. 
Let $\delta_3 $ be a constant such that $ 0 <    \delta_3 <  \frac{ \sqrt{2}}{ e C_{1/\sqrt{2}}  }  $. Since $g(u,v)$ is bounded for $ |u| , |v| \leq \frac{\delta_3}{2 \pi} $ similarly to \eqref{est doube sum tilde b},  we have
$$ d_{k,\ell}  = \frac{1}{ ( 2 \pi i )^{ k+ \ell +2 }} \oint_{ |u|= \frac{\delta_3}{2 \pi}} \oint_{ |v|= \frac{\delta_3}{2 \pi} }  \frac{ g(u,v)}{u^{k+1} v^{\ell+1} }  dv  du =  O( \delta_3^{-(k+\ell)} ) . $$ 
 \end{proof}

\begin{lemma}\label{lemma4}
Let $\{ d_{k,\ell} \}_{k,\ell\geq 0 }$ be the sequence of real numbers in Lemma \ref{lemma3}. There exist constants $\epsilon, \eta > 0$ such that 
\begin{align*}
F_{\sigma_T} ( x,y)  &=  e^{ - ( x^2 + y^2 )/\psi_T}   \sum_{ k+ \ell \leq \epsilon \psi_T   } \frac{d_{k,\ell} }{   \pi       \sqrt{\psi_T} ^{k+\ell+2 }}   H_k \bigg(  \frac{x}{\sqrt{\psi_T}}  \bigg)H_\ell \bigg(  \frac{y}{\sqrt{\psi_T}}  \bigg)  +    O \bigg( \frac{ 1}{ ( \log T)^{\eta}} \bigg)
\end{align*}
for all $ x,y \in \mathbb{R}$, where  $H_n (x)$ is the $n$-th Hermite polynomial defined in \eqref{def Hermite poly}. 
\end{lemma}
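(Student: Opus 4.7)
The plan is to start from the Fourier inversion representation \eqref{F Phrhat int} and substitute the local expansion of $\widehat{\Phi}_{\mathrm{rand}}$ from Lemma \ref{lemma3}. Concretely, write
\[
F_{\sigma_T}(x,y)=\iint_{u^2+v^2\leq \delta_2}\Phrhat(u,v)e^{-2\pi i(ux+vy)}\,dudv+\iint_{u^2+v^2>\delta_2}\Phrhat(u,v)e^{-2\pi i(ux+vy)}\,dudv.
\]
For the outer piece I will invoke a pointwise bound of the form $|\Phrhat(u,v)|\ll e^{-c\psi_T}$ valid on $u^2+v^2\geq\delta_2$; such a bound is of the same type used in \cite{LLR} and \cite{HL} to establish \eqref{eqn disc}, and it makes the outer integral super-polynomially small in $\log T$. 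On the inner region I substitute Lemma \ref{lemma3} to obtain
\[
\sum_{k,\ell\geq 0}(2\pi i)^{k+\ell}d_{k,\ell}\iint_{u^2+v^2\leq\delta_2}u^kv^\ell e^{-\pi^2\psi_T(u^2+v^2)}e^{-2\pi i(ux+vy)}\,dudv+E_{1},
\]
where $E_{1}$, coming from the $O((\log T)^{-\theta})$ remainder of Lemma \ref{lemma3} integrated against $e^{-\pi^2\psi_T(u^2+v^2)}$ on a bounded disc, is $\ll \psi_T^{-1}(\log T)^{-\theta}$.

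Next I would truncate the series at $k+\ell=N:=\lfloor\epsilon\psi_T\rfloor$. For the head $k+\ell\leq N$, replace the disc $u^2+v^2\leq \delta_2$ by all of $\mathbb{R}^2$; the error is exponentially small in $\psi_T$ (a Gaussian tail weighted by a polynomial). Each resulting full-space integral is then evaluated by the classical identity
\[
\int_{-\infty}^\infty u^k e^{-\pi^2\psi_T u^2}e^{-2\pi iux}\,du=\frac{\sqrt{\pi}}{(\pi\sqrt{\psi_T})^{k+1}(2i)^k}\,H_k\!\left(\frac{x}{\sqrt{\psi_T}}\right)e^{-x^2/\psi_T},
\]
derived by differentiating the Gaussian Fourier transform with respect to the frequency parameter and using \eqref{def Hermite poly}. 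Multiplying the $u$- and $v$-integrals and simplifying $(2\pi i)^{k+\ell}/(2i)^{k+\ell}=\pi^{k+\ell}$, the main term collapses to
\[
e^{-(x^2+y^2)/\psi_T}\sum_{k+\ell\leq N}\frac{d_{k,\ell}}{\pi\sqrt{\psi_T}^{\,k+\ell+2}}H_k\!\left(\frac{x}{\sqrt{\psi_T}}\right)H_\ell\!\left(\frac{y}{\sqrt{\psi_T}}\right),
\]
exactly as required.

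The step I expect to be the main obstacle is the tail of the series, namely estimating
\[
\sum_{k+\ell>N}|d_{k,\ell}|\,(2\pi)^{k+\ell}\iint_{u^2+v^2\leq\delta_2}|u|^k|v|^\ell e^{-\pi^2\psi_T(u^2+v^2)}\,du\,dv.
\]
Using $|d_{k,\ell}|\ll\delta_3^{-(k+\ell)}$ from Lemma \ref{lemma3} and the Gaussian moment identity $\int|u|^ke^{-\pi^2\psi_T u^2}du\ll(\pi^2\psi_T)^{-(k+1)/2}\Gamma((k+1)/2)$, together with Stirling, each summand is bounded by $\psi_T^{-1}\bigl(C\sqrt{(k+\ell)/\psi_T}\bigr)^{k+\ell}$ for an absolute constant $C$. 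Choosing $\epsilon>0$ small enough that $C\sqrt{\epsilon}<\tfrac12$, the sum over $k+\ell>N=\lfloor\epsilon\psi_T\rfloor$ becomes a geometric series in $(1/2)^{k+\ell}$ summed from $N+1$, hence is exponentially small in $\psi_T$ and dominated by $(\log T)^{-\eta}$ for an appropriate $\eta>0$. Combining this bound with the Gaussian-tail error from extending to $\mathbb{R}^2$, the bound on $E_1$, and the outer-disc bound from the first step yields the claimed remainder of size $O((\log T)^{-\eta})$, and completes the argument.
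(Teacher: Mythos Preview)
Your overall strategy matches the paper's: restrict the inversion integral \eqref{F Phrhat int} to a small disc (the paper cites Lemma~3.5 of \cite{HL} for the outer piece, which is the decay input you describe), apply Lemma~\ref{lemma3} inside, truncate the resulting series at $k+\ell\le\epsilon\psi_T$, extend the head terms to $\mathbb{R}^2$, and evaluate via the Hermite identity. The head treatment and the final computation are essentially identical to the paper's.

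The one genuine gap is your tail estimate. The bound
\[
\psi_T^{-1}\Bigl(C\sqrt{(k+\ell)/\psi_T}\Bigr)^{k+\ell}
\]
is \emph{not} dominated by $(1/2)^{k+\ell}$ for all $k+\ell>N$: as soon as $k+\ell$ exceeds $\psi_T/(4C^2)$ the base $C\sqrt{(k+\ell)/\psi_T}$ passes $1/2$, and for $k+\ell\gg\psi_T$ it diverges. So the sum over $k+\ell>N$ is not a geometric series as you claim; Gaussian moments alone cannot control the far tail. (Relatedly, the series $\sum_{k,\ell}|d_{k,\ell}|(2\pi)^{k+\ell}|u|^k|v|^\ell$ is only guaranteed to converge for $|u|,|v|<\delta_3/(2\pi)$, which need not contain the disc $u^2+v^2\le\delta_2$, so even writing the tail on that disc is delicate.)

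The paper's fix is to first shrink the disc to radius $\sqrt{\delta_4}$ with $\delta_4<\min\bigl(\delta_2,\ \delta_3^2(2\pi)^{-2}\bigr)$, and then bound the tail summand on this smaller disc by the trivial estimate $|u|^k|v|^\ell\le\delta_4^{(k+\ell)/2}$. This yields
\[
\ll\ \psi_T^{-1}\bigl(2\pi\sqrt{\delta_4}/\delta_3\bigr)^{k+\ell},
\]
a genuine geometric series since $2\pi\sqrt{\delta_4}/\delta_3<1$ by the choice of $\delta_4$. Inserting this shrinking step repairs your argument.
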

\begin{proof}
Let $\delta_4 $ be a constant satisfying $ 0< \delta_4 < \min( \delta_2 ,  \delta_3^2 (2 \pi)^{-2})$. By applying Lemma 3.5 of \cite{HL} to  \eqref{F Phrhat int}, there is a constant $\eta_1 > 0 $ such that
$$ F_{\sigma_T } ( x, y ) = \iint_{ u^2+v^2 \leq \delta_4   } \Phrhat( u,v) e^{-  2 \pi i ( ux+vy)} du dv + O \bigg( \frac{1}{ ( \log T)^{\eta_1} } \bigg) .$$
 Let $ \epsilon $ be a constant satisfying $ 0 < \epsilon < \frac{e}{4} \delta_2^2$. By Lemma \ref{lemma3}, we have
\begin{align*}
F_{\sigma_T } ( x, y )   
=  & \sum_{ k, \ell \geq 0   } ( 2\pi i )^{k+\ell}  d_{k,\ell}  \iint_{ u^2+v^2 \leq \delta_4   } e^{- \pi^2 (u^2+v^2) \psi_T }      u^k v^\ell   e^{-  2 \pi i ( ux+vy)}du dv  + O\bigg( \frac{ 1}{ ( \log T)^{\eta_2}}   \bigg) \\ 
=  & \sum_{ k+ \ell \leq \epsilon \psi_T   }  ( 2\pi i )^{k+\ell}  d_{k,\ell}  \iint_{ u^2+v^2 \leq \delta_4   } e^{- \pi^2 (u^2+v^2) \psi_T }      u^k v^\ell   e^{-  2 \pi i ( ux+vy)}du dv \\
&  + O\bigg( \sum_{ k+ \ell > \epsilon \psi_T }   \frac{  ( 2 \pi )^{k+\ell} \delta_4^{(k+\ell)/2}}{\delta_3^{k+\ell} \psi_T}+ \frac{ 1}{ ( \log T)^{\eta_2}}   \bigg)  
\end{align*}
where $\eta_2 = \min( \eta_1 , \theta) $. Since $ \delta_4 <     \delta_3^2 (2 \pi)^{-2} $, the $O$-term is
$  O ( ( \log T)^{-\eta_3} )$ for some $\eta_3 > 0 $.  

To complete the proof, it requires to estimate the last integral, which equals to
\begin{equation}\label{last integral split}
 \iint_{ \mathbb{R}^2  } e^{- \pi^2 (u^2+v^2) \psi_T }   u^k v^\ell    e^{-  2 \pi i ( ux+vy)}du dv  -\iint_{ u^2 + v^2 > \delta_4 } e^{- \pi^2 (u^2+v^2) \psi_T }   u^k v^\ell    e^{-  2 \pi i ( ux+vy)}du dv  .
 \end{equation}
The second integral in \eqref{last integral split} is 
\begin{align*}
\bigg| \iint_{ u^2 + v^2 > \delta_4  } & e^{- \pi^2 (u^2+v^2) \psi_T }   u^k v^\ell    e^{-  2 \pi i ( ux+vy)}du dv  \bigg| \\
&\leq \iint_{ u^2 + v^2 > \delta_4 } e^{- \pi^2 (u^2+v^2) \psi_T }   |u|^k |v|^\ell    du dv \\
&\leq \bigg(  \iint_{ u^2 + v^2 > \delta_4 } e^{- \pi^2 (u^2+v^2) \psi_T }     du dv \bigg)^{1/2}  \bigg( \iint_{ \mathbb{R}^2  } e^{- \pi^2 (u^2+v^2) \psi_T }   u^{2k} v^{2\ell}     du dv \bigg)^{1/2} \\
& =   e^{-  \frac{\pi^2 \delta_4 }{2} \psi_T}  \frac{ \sqrt{\pi} }{ ( \pi \sqrt{\psi_T})^{k+\ell+2}} \sqrt{ \Gamma( k+ \frac12) \Gamma ( \ell+ \frac12 ) } 
\end{align*}
by the Cauchy-Schwartz inequality. By Stirling's formula, the above is
\begin{align*}
& \ll e^{-  \frac{\pi^2 \delta_4}{2} \psi_T} \frac{1}{ ( \pi \sqrt{\psi_T})^{k+\ell+2}} \frac{ ( k+1/2)^{k/2} (\ell+1/2)^{\ell/2}}{ e^{(k+\ell)/2}}  
 \leq  e^{-  \frac{\pi^2 \delta_4}{2} \psi_T} \frac{1}{ \pi^2 \psi_T} \bigg( \frac{ \sqrt{ \epsilon}}{ \pi \sqrt{e}} \bigg)^{k+\ell} 
  \end{align*}
  for $ k+\ell \leq \epsilon \psi_T$. 
Since $ d_{k,\ell} = O( \delta_3^{-(k+\ell)})$ by Lemma \ref{lemma3},    the contribution of the second integral in \eqref{last integral split} to $F_{\sigma_T}(x,y) $ is
$$ O\bigg( \sum_{ k+ \ell \leq \epsilon \psi_T} e^{-  \frac{\pi^2 \delta_4}{2} \psi_T} \frac{1}{  \psi_T} \bigg( \frac{ 2 \sqrt{ \epsilon}}{ \delta_3    \sqrt{e}} \bigg)^{k+\ell} \bigg) = O\bigg( \frac{1}{ ( \log T)^{\eta_4}}\bigg)$$
for some $\eta_4 >0$ since $ \epsilon < \frac{e}{4}      \delta_3 ^2  $. Therefore, we have
$$
F_{\sigma_T } ( x, y )   
 =    \sum_{ k+ \ell \leq \epsilon \psi_T   }  ( 2\pi i )^{k+\ell}  d_{k,\ell}  \iint_{ \mathbb{R}^2 } e^{- \pi^2 (u^2+v^2) \psi_T }      u^k v^\ell   e^{-  2 \pi i ( ux+vy)}du dv 
   + O\bigg( \frac{1}{ ( \log T)^{\eta}}\bigg)  $$
   with $\eta = \min ( \eta_3 , \eta_4)$. Since the last integral equals to  
   \begin{align*}
 &  \frac{1}{ ( -  2 \pi i )^{k+\ell}} \frac{\partial^{k+\ell} }{ \partial x^k \partial y^\ell}    \iint_{ \mathbb{R}^2  } e^{- \pi^2 (u^2+v^2) \psi_T }       e^{-  2 \pi i ( ux+vy)}du dv \\
=&  \frac{1}{ ( -  2 \pi i )^{k+\ell}} \frac{\partial^{k+\ell} }{ \partial x^k \partial y^\ell}\bigg( \frac{1}{ \pi \psi_T} e^{ - ( x^2 + y^2 )/\psi_T}\bigg) \\
= &\frac{1}{ \pi \psi_T} \frac{1}{ (    2 \pi i \sqrt{\psi_T} )^{k+\ell}} e^{ - ( x^2 + y^2 )/\psi_T}  H_k \bigg(  \frac{x}{\sqrt{\psi_T}}  \bigg)H_\ell \bigg(  \frac{y}{\sqrt{\psi_T}}  \bigg) ,
\end{align*}
the lemma holds.

\end{proof}

\begin{proof}[Proof of Theorem \ref{main theorem}]  
  
  The theorem holds by \eqref{eqn clt proof 1} and Lemma \ref{lemma4}.

\end{proof}

 \section{acknowledgemet}
 This work was supported by Incheon National University RIBS Grant in 2020. We thank an anonymous referee for informing us of Hejhal's paper \cite{H}.

\end{document}